\documentclass[10pt]{amsart}
\usepackage{amsmath,amsfonts,amsthm,amssymb,amscd, soul}
\def\classification#1{\def\@class{#1}}
\classification{\null}

\usepackage[a4paper, total={6in, 8in}]{geometry}
\oddsidemargin=-.1cm

\textwidth 16cm
\setlength{\rightmargin}{1in}
\DeclareFontFamily{OT1}{rsfs}{}
\DeclareFontShape{OT1}{rsfs}{n}{it}{<-> rsfs10}{}
\DeclareMathAlphabet{\mathscr}{OT1}{rsfs}{n}{it}


\newcommand{\R}{{\mathbb R}}

\newcommand{\C}{\mathbb{C}}

\newtheorem{theorem}{Theorem}
\newtheorem{conjecture}{Conjecture}
\newtheorem{proposition}[theorem]{Proposition}
\newtheorem{lemma}[theorem]{Lemma}
\newtheorem{corollary}[theorem]{Corollary}
\theoremstyle{remark}
\newtheorem{remark}[theorem]{Remark}

\title{
On distinct cross-ratios and related growth problems}

\author{Misha Rudnev}
\address{Misha Rudnev, Department of Mathematics, University of Bristol,
  Bristol BS8 1TW, United Kingdom}
\email{m.rudnev@bristol.ac.uk}

\subjclass[2000]{68R05, 11B75}
\begin{document}
\begin{abstract} It is shown that for a finite set $A$ of four or more complex numbers, the cardinality of the set $C[A]$ of all cross-ratios generated by quadruples of pair-wise distinct elements of $A$ is $|C[A]|\gg |A|^{2+\frac{2}{11}}\log^{-\frac{6}{11}} |A|$ and without the logarithmic factor in the real case. The set $C=C[A]$ always grows under both addition and multiplication.

The cross-ratio arises, in particular, in the study of the open question of the minimum number of triangle areas, with two vertices in a given non-collinear finite point set in the plane and the third one at the fixed origin. The above distinct cross-ratio bound implies a new lower bound for the latter question, and enables one to show growth of the set $\sin(A-A),\;A\subset \R/\pi\mathbb Z$ under multiplication. It seems reasonable to conjecture that more-fold product, as well as sum sets of this set or $C$ continue growing ad infinitum.\end{abstract}
\maketitle

\section{Introduction and results}  Given an ordered quadruple of pair-wise distinct elements $a,b,c,d\in F P^1$, the projective line over a field $F$, their cross-ratio is defined as
\begin{equation}\label{crr}
[a,b,c,d] := \frac{(a-b)(c-d)}{(a-c)(b-d)}.
\end{equation}
Further $F$ is the real or complex field, and we write $C[A]$ for the set of cross-ratios generated by all quadruples of pair-wise distinct elements $a,b,c,d$ in some finite set of scalars $A\subset F$ of cardinality $|A|\geq 4$. The standard notations $X\ll Y$, equivalently $X=O(Y)$ or $Y\gg X$, are used to suppress absolute constants in inequalities. Besides, $Y\gtrsim X$ means $Y\gg X\log^{-c}|A|$ for some $c>0$; logarithms are base $2$.

\medskip
The cross-ratio represents the key projective invariant, in particular being  invariant under the action of  the group $PSL_2(F)$ on $FP^1$, acting as M\"obius transformations, for \eqref{crr} is invariant under a simultaneous shift, dilation, or inversion of all its variables. Conversely, two cross-ratios $[a,b,c,d]$ and $[a',b',c',d']$  are equal only if there is a M\"obius transformation, mapping $a$ to $a'$, $b$ to $b'$, $c$ to $c'$, and $d$ to $d'$.  That is, there exist scalars $\alpha,\beta,\gamma,\delta$, with $\alpha\delta - \beta\gamma=1$, $a'= \frac{\alpha a+\beta}{\gamma a +\delta},$ and the same for $b,c$ and $d$.

Furthermore, given a family of collinear points in the projective plane, their cross-ratios (when the points are viewed as scalars and the line supporting them as $FP^1$) are invariant with respect to projective transformations of the plane. 

The cross-ratio appears to be closely related to the the so-called sum-product phenomenon, with the landmark Erd\H os-Szemer\'edi conjecture \cite{ES}, which in the context of  real numbers suggests that
$$
|A+A|+|A\cdot A| \gtrsim |A|^{2},
$$
where as usual, $A+A$ and $A\cdot A=AA$ denote the set of all pair-wise sums and, respectively, products of elements of $A$. The notations $kA,\,A^k$, for  integer $k\geq 2$, $A^{-1}=1/A,\;A/A$ are used, respectively, for the $k$-fold sum or product set of $A$,  the set of finite reciprocals and ratios of elements of $A$; for a complex-valued function $f$, $f(A)$ denotes the range of the restriction of $f$ to $A$.

The observation that the sum-product conjecture can be viewed and explored as a projective plane problem may have been first made and followed upon in a paper by Chang and Solymosi \cite{CS}.

The cross-ratio has appeared in the study of two more open questions. One is the following conjecture.
\begin{conjecture}\label{c1} For a non-collinear point set $E\in \R^2$, the cardinality of the set $\omega[E]$ of the values of the symplectic form $\omega$ on all pairs of vectors -- points of $E$ always satisfies $|\omega[E]|\gtrsim |E|$.\end{conjecture}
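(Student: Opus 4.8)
\emph{Proof proposal.} The plan is to split according to the number $m$ of distinct directions $OP$, $P\in E$, and to push the many-directions case through the distinct cross-ratio bound stated above. First I would make the harmless reductions: rotate so that no point of $E$ lies on the $y$-axis, and drop the origin from $E$ if it occurs there (a point of $E$ at $O$ only adds the value $0$ to $\omega[E]$); ``non-collinear'' then means exactly that $E$ is not contained in one line through $O$. Put $n=|E|$, let $D=\{y_P/x_P:P\in E\}$ be the corresponding set of slopes, $m=|D|$, and for a direction $\delta$ let $R_\delta\subset\R\setminus\{0\}$ be the set of coordinates of the points of $E$ on that line relative to a fixed vector $u_\delta$ along it, with $n_\delta=|R_\delta|$ and $m_1=\max_\delta n_\delta$. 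Two bounds come for free: for $\ell$ the richest radial line and $P_0\in E\setminus\ell$, the map $Q\mapsto\omega(Q,P_0)$ is a non-zero dilation in the coordinate along $\ell$, so $|\omega[E]|\ge m_1$; and a short case analysis on the lines parallel to a fixed $OP_0$ and on a fat one among them gives $|\omega[E]|\gtrsim\sqrt n$ under non-collinearity alone. The point is to beat $\sqrt n$.

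The cross-ratio input is the identity, a one-line computation from \eqref{crr} in which the $x$-coordinates cancel,
$$
[s_1,s_2,s_3,s_4]=\frac{\omega(P_1,P_2)\,\omega(P_3,P_4)}{\omega(P_1,P_3)\,\omega(P_2,P_4)},\qquad s_i=y_{P_i}/x_{P_i},
$$
valid whenever $P_1,\dots,P_4\in E$ have pair-wise distinct directions. Thus every element of $C[D]$ has the shape $w_1w_2/(w_3w_4)$ with $w_i\in\omega[E]\setminus\{0\}$, whence $|C[D]|\le|\omega[E]|^4$; comparing with the distinct cross-ratio bound in the real case, $|C[D]|\gg m^{2+\frac{2}{11}}$, gives
$$
|\omega[E]|\gg m^{6/11}.
$$
Because $n\le m\,m_1$, combining this with $|\omega[E]|\ge m_1$ and the unconditional $|\omega[E]|\gtrsim\sqrt n$ already improves on $\sqrt n$ when $m$ is small or $m$ is close to $n$; the difficulty is concentrated in the intermediate range $\sqrt n\lesssim m\lesssim n$.

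The main obstacle is precisely this middle regime — $E$ spread over $m\approx\sqrt n$ directions, each holding about $\sqrt n$ collinear points with $R_\delta$ progression-like — where the cross-ratio bound is silent. Here one must use two more structural facts at once: $\omega[E]$ contains every product set $\omega(u_\delta,u_{\delta'})\,R_\delta R_{\delta'}$, and in polar form $\omega(P,Q)=r_Pr_Q\sin(\theta_P-\theta_Q)$, so smallness of $\omega[E]$ imposes multiplicative structure on the radii and additive structure on the angle-differences simultaneously. Turning this sum-product tension into a quantitative gain — presumably via a Szemer\'edi--Trotter or Guth--Katz estimate for the pencils of level lines $\{Q:\omega(P,Q)=v\}$, or via a Pl\"unnecke--Ruzsa argument forcing all the $R_\delta$ into one common geometric progression and then contradicting the cross-ratio bound for the direction set itself — is the step I expect to be genuinely hard; indeed even the conjectural $|C[A]|\gg|A|^3$ would, by the computation above, give only $|\omega[E]|\gg m^{3/4}$, so reaching the full exponent $1$ of Conjecture~\ref{c1} looks to need an essentially new idea in this range rather than a better cross-ratio count. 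A bound of the form $|\omega[E]|\gtrsim|E|^{1/2+c}$ with an absolute $c>0$ is the realistic target of this approach.
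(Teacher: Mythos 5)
This statement is Conjecture~\ref{c1} of the paper: it is an open problem, not a theorem, and the paper offers no proof of it. The strongest result the paper actually establishes in its direction is Corollary~\ref{misha}, namely $|\omega[E]|\gtrsim|E|^{96/137}$ (deferred to the proof of Theorem~1 and Lemma~10 of \cite{IRR}), which is far from the conjectured exponent $1$. So there is no ``paper's own proof'' to match yours against, and any submission purporting to prove the statement would have to be judged as resolving an open conjecture.

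Your proposal does not do that, and to your credit you say so explicitly: the argument establishes only $|\omega[E]|\gg m^{6/11}$ in the number of directions $m$, plus the trivial $\max(m_1,\sqrt{n})$ bounds, and you concede that the regime $m\approx\sqrt{n}$ with progression-like radial sets is untouched, that even the conjectural $|C[A]|\gg|A|^3$ would only yield $m^{3/4}$, and that an essentially new idea is needed. That concession is correct, and it is the genuine gap: the proposal is a programme, not a proof. The core identity you use --- that the cross-ratio of four slopes equals a ratio of products of $\omega$-values, the paper's formula \eqref{tri} in polar form --- is exactly the mechanism behind Corollary~\ref{misha}, so your partial approach is aligned with what is actually known; but the passage from $|C[D]|\le|\omega[E]|^4$ is very lossy (the sharper balancing in \cite{IRR} is what produces $96/137$ rather than $6/11$ of $m$), and nothing in the proposal closes the distance to exponent $1$. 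In short: the statement remains a conjecture, and the proposal correctly identifies, but does not bridge, the missing step.
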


In other words, $\omega[E]$ is the set of all oriented areas of triangles $Oab$ with $a,b\in E$ and a fixed origin $O$.  It is not clear, to whom Conjecture \ref{c1} can be formally attributed, nor whether the $\gtrsim$ symbol in its above formulation may be replaced by $\gg$. In the past Conjecture \ref{c1} may have been regarded as somewhat second to the well-known conjecture of Erd\H os on the minimum number of distinct distances determined by a plane set of $N$ points. The latter question was resolved by Guth and Katz \cite{GK}, but the one on the minimum number of distinct values of any non-degenerate bilinear form on a set of $N$ points is wide open. See \cite{IRR}, \cite{57} for the best known results.

The other question was raised in a recent paper by Balog, Roche-Newton and Zhelezov \cite{BRZ}. Let the set of scalars $A$ have at least three elements and $D=A-A$. \begin{conjecture}\label{c2} For any positive integer $n$, there is a universally and polynomially related $m$, such that
$|D^m|\geq |D|^n$.\end{conjecture}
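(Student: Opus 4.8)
The plan is to route everything through the cross-ratio set and the bound of the Main Theorem, $|C[A]|\gtrsim|A|^{2+\frac2{11}}$ (with $\gg$ over $\R$). The starting point is that a cross-ratio is a ratio of products of two differences, so with $D=A-A$ we have the inclusion $C[A]\subseteq D^2/D^2$; requiring $a,b,c,d$ to be pairwise distinct only removes the element $0$ of $D$ and is harmless. Since $a_0-A\subseteq D$ for a fixed $a_0\in A$ we get $|A|\le|D|\le|A|^2$, hence $|D|^{1/2}\le|A|\le|D|$, and therefore
\begin{equation*}
|D^2/D^2|\ \ge\ |C[A]|\ \gtrsim\ |A|^{2+\frac2{11}}\ \ge\ |D|^{1+\frac1{11}}.
\end{equation*}
Feeding this into the multiplicative Plünnecke--Ruzsa inequality $|D^2/D^2|\le(|D^2|/|D|)^{4}|D|$ gives $|D^2|\gtrsim|D|^{1+\frac1{44}}$, which is Conjecture \ref{c2} at the base level: $m=2$ works for every $n<1+\frac1{44}$ (and with $\gg$ in the real case).

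To amplify this base case to an arbitrary exponent $n$ I would iterate at the level of $C=C[A]$ rather than of $D$. From $C[A]\subseteq D^2/D^2$ an easy induction gives $C^k\subseteq D^{2k}/D^{2k}$ for the $k$-fold product set $C^k$, and the Ruzsa triangle inequality $|D^{2k}/D^{2k}|\le|D^{4k}|^{2}$ then yields $|C^k|\le|D^{4k}|^{2}$, i.e. $|D^{4k}|\ge|C^k|^{1/2}$. So it would suffice to prove a \emph{self-improving} multiplicative growth estimate for the cross-ratio set of the shape $|C^k|\ge|C|^{\kappa(k)}$ with $\kappa(k)\gg k^{c}$ for some absolute $c>0$: combined with $|C|\gtrsim|D|$ (which follows from $|C|\gtrsim|D|^{1+\frac1{11}}$), this would give $|D^{4k}|\gtrsim|D|^{\kappa(k)/2}\ge|D|^{n}$ as soon as $k\gg n^{1/c}$, so $m=4k$ is polynomial in $n$, as required. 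The one-step growth of $C$ that the paper establishes — a consequence of the invariance of $C[A]$ under the anharmonic group $\lambda\mapsto1-\lambda$, $\lambda\mapsto1/\lambda$, together with the rigidity principle that a set fixed by these maps cannot be simultaneously additively and multiplicatively structured — is a seed for such an estimate, and the whole task reduces to making it compound.

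The hard part will be precisely this amplification, and I do not see how to carry it out with the tools in the excerpt; the obstruction looks structural. Passing from $C$ to $C\cdot C$ destroys the anharmonic-group symmetry, so the rigidity argument that forces $|C\cdot C|\gtrsim|C|^{1+c_0}$ has nothing to act on at the next stage, while the inclusion $C^k\subseteq D^{2k}/D^{2k}$ alone is far too lossy to recycle. A natural fallback is to treat the extremes separately: when $A$ is close to an arithmetic progression, $D$ is a dilate of an integer progression and $|D^m|\gg_m|D|^{m/2}$ can be obtained by counting primes in progressions (or, for irrational common difference, by a degree/transcendence argument on the shifts $n+i$), which already gives Conjecture \ref{c2} with $m=O(n)$; when $A$ is additively spread, $|D|$ is essentially $|A|^2$ and one hopes to bootstrap the cross-ratio bound. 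But gluing the two regimes through a Freiman-type structure theorem costs super-polynomial factors and so loses the polynomial dependence $m=\mathrm{poly}(n)$ that the conjecture demands. A full proof therefore seems to require a genuinely iterable growth mechanism for $C[A]$, or a direct multi-scale incidence argument producing $|D^m|\ge|D|^n$ in one stroke; the approach above supplies the first rung of the ladder but not the recursion.
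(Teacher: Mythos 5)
The statement you were asked to prove is Conjecture \ref{c2}, which the paper attributes to Balog, Roche-Newton and Zhelezov and explicitly treats as \emph{open}: the paper offers no proof of it, and the strongest unconditional result it cites in that direction is Shkredov's local bound $|DD|\gtrsim|D|^{1+\frac{1}{12}}$. So there is no proof in the paper to compare yours against, and your own assessment is the correct one: what you have written is not a proof of the conjecture. Your base-level computation is sound --- the inclusion $C[A]\subseteq D^2/D^2$, the chain $|D^2/D^2|\ge|C[A]|\gtrsim|A|^{2+\frac{2}{11}}\ge|D|^{1+\frac{1}{11}}$, and the multiplicative Pl\"unnecke--Ruzsa step yielding $|D^2|\gtrsim|D|^{1+\frac{1}{44}}$ are all valid --- but this only recovers a slightly weaker form of Shkredov's known local growth estimate, i.e.\ the case of a single fixed exponent $n=1+\frac{1}{44}$, not an arbitrary $n$ with $m$ polynomial in $n$.

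The genuine gap is exactly where you place it: the amplification. The conjecture is a statement about \emph{global, ad infinitum} growth, and a one-step expansion bound does not iterate, for the structural reason you identify --- the anharmonic symmetries $C-1=-C$ and $1/C=C$ that drive the growth of $C$ (Proposition \ref{cor1}) are destroyed after a single product, and the inclusion $C^k\subseteq D^{2k}/D^{2k}$ is too lossy to close a recursion. The only ad infinitum results the paper has are either over the integers/rationals (Bourgain--Chang, cited as \cite{BC}) or conditional on Conjecture \ref{c1} and merely logarithmic (the final Proposition, for $\sin(\Phi-\Phi)$ rather than $D$). Your fallback of splitting into a structured regime and a spread regime founders on the gluing step, as you note. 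In short: the first rung of your ladder is correct and consistent with what is known, but the conjecture itself remains unproved, by you and by the paper alike.
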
 
Let us further refer to growth as above as {\em polynomial, ad infinitum.}

\medskip
Shkredov \cite{Sh} was able to show a {\em local} instance of this conjectured {\em global} growth by proving that $|DD|\gtrsim |D|^{1+\frac{1}{12}}$ (see \cite{57} for the positive characteristic analogue of this result). Proposition \ref{cor2} herein establishes a slightly weaker bound for the set $\sin D \cdot \sin D$ (in the case $A\subset \R/\pi \mathbb Z$). Our interest in the latter set stems from the fact that it seems plausible that the set $\sin D$ should necessarily grow polynomially ad infinitum both under addition and multiplication (which is certainly not the case with $D=A-A$). Apart from local growth of the latter set under multiplication, in the end of this note we show logarithmic global growth, conditional on  Conjecture \ref{c1}.

\medskip
In addition to Conjecture \ref{c1}, it appears reasonable to suggest that one always has $|C[A]| \gtrsim |A|^3$. There are approximately $|A|^3$ distinct cross-ratios if $A$ is a geometric progression, and there is no evidence that the claim cannot be strengthened to $|C[A]| \gg |A|^3$. What has been known so far is much weaker: the ``basic'' estimate that $|C[A]| \gg |A|^2$ is ten years old, due to Solymosi and Tardos \cite{ST}. More precisely, the latter estimate is implicit in \cite[Theorem 4]{ST}; some five years thereafter it became independently rediscovered by Jones \cite{J}.

The main result here is the following increment. 

\begin{theorem}  \label{mish}  Let $A$ be a finite set of at least four  complex numbers. Then $|C[A]|\gtrsim |A|^{2+\frac{2}{11}}.$ For real $A$, one has $|C[A]|\gg |A|^{2+\frac{2}{11}}.$    \end{theorem}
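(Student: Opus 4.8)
The plan is to convert the desired lower bound into an upper bound on the number $E$ of \emph{cross-ratio collisions}: the number of $8$-tuples $(a,b,c,d,a',b',c',d')\in A^{8}$, with $a,b,c,d$ and $a',b',c',d'$ each pairwise distinct, for which $[a,b,c,d]=[a',b',c',d']$. Since there are $\gg|A|^{4}$ admissible quadruples, Cauchy--Schwarz gives $|C[A]|\gg|A|^{8}/E$, so it suffices to prove $E\ll|A|^{6-\frac{2}{11}}$, with an extra power of $\log|A|$ tolerated over $\C$. By the M\"obius rigidity of the cross-ratio recalled above, each collision is recorded by the \emph{unique} $g\in PSL_2(F)$ with $g(a)=a',\dots,g(d)=d'$; writing $\mu(g):=\#\{x\in A:g(x)\in A\}$ one gets the exact identity $E=\sum_{g}\mu(g)(\mu(g)-1)(\mu(g)-2)(\mu(g)-3)$, a finite sum, since only finitely many $g$ --- those determined by the images of three points of $A$ --- have $\mu(g)\ge3$. (Fixing one ``frame'' $a,d$ and varying $b,c$ displays $C[A]$ as containing the ratio set $B/B$, $B=\{(a-x)/(x-d)\}$; this underlies the Solymosi--Tardos bound, but $|B/B|\le|B|^{2}$, so a single frame is stuck at $|A|^{2}$ and the gain has to come from using all frames at once --- i.e.\ the three-dimensionality of $PSL_2$.)

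The engine is linearisation. For $g=\bigl(\begin{smallmatrix}\alpha&\beta\\ \gamma&\delta\end{smallmatrix}\bigr)$ the equation $g(x)=y$ reads $\alpha x+\beta-\gamma xy-\delta y=0$, which is linear in the coordinates $[\alpha:\beta:\gamma:\delta]\in FP^{3}$. Hence every $(x,y)\in A\times A$ determines a plane $\pi_{x,y}\subset FP^{3}$, distinct pairs give distinct planes, and $\mu(g)$ equals the number of the $|A|^{2}$ planes through the point representing $g$ --- which, being an honest M\"obius map, lies off the ``degenerate'' quadric $\alpha\delta=\beta\gamma$. So $E$ is the fourth moment of the plane-multiplicity function of $|A|^{2}$ planes in $FP^{3}$, taken over the complement of that quadric. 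A short computation identifies the only lines lying on three or more of the $\pi_{x,y}$ as $\{\alpha x_{0}+\beta=\gamma x_{0}+\delta=0\}$ and $\{\alpha-\gamma y_{0}=\beta-\delta y_{0}=0\}$, all of which are contained in the degenerate quadric; thus, among honest M\"obius maps, no line carries more than two of our planes, whence every line contains $O(|A|^{2}/M)$ of the points with $\mu(g)\ge M$. Feeding this into Rudnev's point--plane incidence theorem --- in its sharp log-free form over $\R$, and in its complex form at the price of a logarithm, which is the sole source of the $\log$-factors and of the $\gg$/$\gtrsim$ distinction in the statement --- yields for every dyadic $M$ the level-set bound $\#\{g:\mu(g)\ge M\}\ll|A|^{4}/M^{2}$; summing $M^{4}$ against this over $2\le M\le|A|$ already gives $E\ll|A|^{6}$, i.e.\ the Solymosi--Tardos estimate, with all the mass sitting at the top scale $M\asymp|A|$.

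To push past that I would run a dichotomy at the scale $M\asymp|A|^{10/11}$. If $\mu(g)\le|A|^{10/11}$ for every $g$, the dyadic sum terminates there and gives $E\ll|A|^{4}\cdot|A|^{20/11}=|A|^{6-\frac{2}{11}}$, hence $|C[A]|\gg|A|^{2+\frac{2}{11}}$. Otherwise some $g_{0}$ has $\mu(g_{0})>|A|^{10/11}$, and the goal is to turn this abundance of approximate automorphisms of $A$ into genuine structure. Here the extremal configuration for the point--plane theorem near the top is instructive: it forces the rich M\"obius transformations onto a few lines in $PSL_2$, and a line of M\"obius maps heavily preserving $A$ is, in the relevant cases, a one-parameter subgroup, with $A$ then meeting one of its orbits in a long set. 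Using that a M\"obius map stabilising a finite set has finite order (hence generates a cyclic, dihedral or polyhedral group), together with the classification of approximate subgroups of $PSL_2(\C)$ to pin down the governing torus or Borel, one aims to conclude that $A$ is, up to a M\"obius change of variable, close to a progression $P$ of length $\gg|A|^{8/11}$; for a progression one has $|C[P]|\gg|P|^{3}$ (as noted above for geometric progressions), whence $|C[A]|\gg|A|^{24/11}=|A|^{2+\frac{2}{11}}$. The threshold $\tfrac1{11}$ is exactly the balance point: $|A|^{4}(|A|^{1-1/11})^{2}$ in the first alternative (an $E$-bound) and $(|A|^{8/11})^{3}$ in the second (a $C[A]$-bound) are calibrated so that each delivers $|C[A]|\gg|A|^{2+2/11}$.

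The hard part is the second alternative. The incidence count genuinely \emph{is} sharp at $M\asymp|A|$ --- progressions attain it --- so the improvement cannot be incidence-theoretic in nature; one must pass from ``$A$ carries many approximate automorphisms'' to a quantitative ``$A$ is close to a single group orbit'', strong enough to yield a progression of size $\gg|A|^{8/11}$ and hence $|C[A]|\gg|A|^{3}$ in that regime, and --- since mere closeness to a progression only reproduces Solymosi--Tardos on that piece --- with enough control on the density to inherit the cubic bound. Organising this efficiently --- presumably by bounding the rich transformations not just in cardinality but additively inside $PSL_2$, applying the point--plane bound to their products, and invoking the $PSL_2(\C)$ classification together with a Freiman-type step --- and doing so at precisely the rate that meshes with the dyadic sum to give $2/11$ rather than a weaker gain, is the crux. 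Two subsidiary chores are keeping the degenerate quadric honestly excised throughout, so that no spurious incidences are counted, and running the real case through a bounded number of scales so that no logarithm leaks in.
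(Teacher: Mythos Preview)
Your proposal has a genuine gap, and the overall route differs fundamentally from the paper's.

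\textbf{Where your argument breaks.} Your dichotomy hinges on bounding the quadruple energy $E$, but you yourself observe that $E\asymp|A|^{6}$ is attained by progressions, so any unconditional bound $|C[A]|\gg|A|^{8}/E$ is stuck at $|A|^{2}$. Your escape is Case~2: if some $g_{0}$ has $\mu(g_{0})>|A|^{10/11}$, deduce that $A$ contains (up to a M\"obius change of variable) a progression of length $\gg|A|^{8/11}$ and invoke $|C[P]|\gg|P|^{3}$. This inference is not justified and, as stated, is not true. A single rich map only gives a set $B\subset A$, $|B|>|A|^{10/11}$, with $g_{0}(B)\subset A$; this places no additive or multiplicative structure on $B$ or $A$ beyond what one already had. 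The classification of approximate subgroups of $PSL_{2}$ you allude to needs many rich maps forming an approximate group, not one rich map; and even the extremal analysis of the point--plane bound at the top scale does not hand you a one-parameter subgroup stabilising a dense piece of $A$. You acknowledge that this is ``the crux'' and leave it open; as written the proof is incomplete.

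A secondary issue: your level-set bound $\#\{g:\mu(g)\ge M\}\ll|A|^{4}/M^{2}$ is false for small $M$ (for $M=3$ one has $\sum_{g}\mu(g)(\mu(g)-1)(\mu(g)-2)\asymp|A|^{6}$, so generically $N_{3}\asymp|A|^{6}\gg|A|^{4}$), and the Rudnev-type argument you sketch requires $N_{M}\le|A|^{2}$ a priori, which you do not have across the full dyadic range. Even granting it for large $M$, Case~1 still needs the small-$M$ contribution controlled, and there the Solymosi--Tardos bound $N_{M}\ll|A|^{6}/M^{5}$ already forces $\sum N_{M}M^{4}\asymp|A|^{6}$.

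\textbf{What the paper does instead.} The paper abandons the fourth moment entirely and works with \emph{pentuples}. For a pentuple $(a,b,c,d,e)$ set $x=[a,b,c,d]$, $y=[a,b,c,e]$; the pair $(x,y)$ determines the pentuple up to congruence, and the Solymosi--Tardos bound on $k$-rich M\"obius maps gives the sharp fifth-moment estimate $P\ll|A|^{6}\log|A|$ (no log over $\R$) for the number of congruent pentuple pairs. Cauchy--Schwarz then yields $|A|^{10}\ll|G|\cdot P$, where $G\subset C\times C$ is the image set of pentuples. The key new input is algebraic: one checks the identities
\[
\frac{x}{y}=[d,c,b,e]\in C,\qquad \frac{x-1}{y-1}=[a,d,e,b]\in C,
\]
so $G$ sits inside a system of equations over $C$. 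A second application of Szemer\'edi--Trotter \emph{inside} $C\times C$ (to the family $r'y'-r(y-1)=1$) then gives $|G|\ll|C|^{11/6}$, whence $|C|\gtrsim|A|^{24/11}$. The exponent $\tfrac{2}{11}$ arises from this $11/6$, not from any dichotomy threshold; no structure theorem or Freiman step is needed.
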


Theorem \ref{mish} breaks a ``psychological'' threshold of proving super-quadratic growth (that is beyond $|A|^2$) for the range of the rational function $[a,b,c,d]$ of four variables in $A$. The aforementioned paper \cite{BRZ} is one of the few in the literature that establishes superquadratic growth, dealing with a six-variable polynomial, corresponding to the set $(A-A)(A-A)(A-A)$. Hopefully, what \cite{BRZ} call {\em superquadratic expanders} are steps towards proving {\em global} {\em ad infinitum} sum-product type growth, preferably polynomial, over the reals (as well as other fields, with a properly adjusted meaning of {\em ad infinitum}). Its instance was conjectured in \cite{BRZ} and probably has inherent links  to expander graph theory. Note that over the integers, and hence rationals, there is an outstanding result on polynomial ad infinitum growth by Bourgain and Chang \cite{BC}.

\medskip
Theorem \ref{mish} can be used to derive a few corollaries. Symmetry properties of $C$ discussed below underlie the following claim.
\begin{proposition}  \label{cor1}  The set $C=C[A]$ satisfies the following growth estimates:
$$
|C\pm C|,\,|C\cdot C|\;\gtrsim \; |C|^{\frac{24}{19}}.
$$
\end{proposition}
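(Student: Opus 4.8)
The proof exploits two structural features of $C=C[A]$.

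\emph{Symmetries.} Because $a,b,c,d$ are pairwise distinct, \eqref{crr} shows $[a,b,c,d]\notin\{0,1,\infty\}$ --- for instance $1-[a,b,c,d]=\frac{(a-d)(b-c)}{(a-c)(b-d)}\neq 0$ --- and permuting the four arguments sends a value $\lambda$ to one of $\lambda,\,1-\lambda,\,1/\lambda,\,1/(1-\lambda),\,(\lambda-1)/\lambda,\,\lambda/(\lambda-1)$, its anharmonic ($S_{3}$) orbit. Hence, with \emph{no} exceptional elements to be discarded,
$$C=1-C=1/C.$$
Consequently $C-C=(C+C)-1$, so $|C+C|=|C-C|$; and $C\cdot C=C\cdot C^{-1}$ \emph{as sets}, so $|C\cdot C|=|C/C|$. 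Thus it suffices to bound $|C-C|$ and $|C/C|$ from below, one may interchange $+$ with $-$ and $\cdot$ with $/$ freely, and by Pl\"unnecke--Ruzsa all short iterated sum/difference and product/quotient sets of $C$ are controlled by $|C+C|$, $|C\cdot C|$ and $|C|$.

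\emph{Self-similarity.} For fixed pivots $a\neq d$ in $A$ set $\phi_{a,d}(t)=\frac{a-t}{t-d}$; then $[a,b,c,d]=\phi_{a,d}(b)/\phi_{a,d}(c)$, equivalently the cocycle identity $[a,b,c,d]\cdot[a,c,e,d]=[a,b,e,d]$ holds. With $B_{a,d}:=\phi_{a,d}(A\setminus\{a,d\})$, a M\"obius image of $A$ of cardinality $|A|-2$, this presents $C$ as a union over the $\sim|A|^{2}$ pivot pairs of multiplicative ratio sets, since $(B_{a,d}/B_{a,d})\setminus\{1\}\subseteq C$; in particular $|C|\le|A|^{2}\max_{a,d}|B_{a,d}/B_{a,d}|$, while $C\cdot C\supseteq B_{a,d}^{2}/B_{a,d}^{2}$ and $C+C\supseteq(B_{a,d}/B_{a,d})+(B_{a,d}/B_{a,d})$ for each pivot pair. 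So $C$ has the multiplicative structure of a ratio set along every pivot fibre, yet is globally invariant under $x\mapsto 1-x$ --- a combination no approximately additively, or approximately multiplicatively, structured set can sustain.

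\emph{The bound.} Assume, for contradiction, that $|C+C|=|C-C|\le L|C|$ and $|C\cdot C|=|C/C|\le K|C|$ with $K,L$ small powers of $|C|$. One runs a Szemer\'edi--Trotter based sum--product argument on $C$: Elekes' inequality $|C+C|\,|C\cdot C|\gg|C|^{5/2}$ (over $\R$, and over $\C$ by the complex Szemer\'edi--Trotter theorem) together with the Solymosi-type refinement $|C\cdot C|\,|C+C|^{2}\gtrsim|C|^{4}$ over $\R$ relates $K$ and $L$, but, seeing only $C$, these by themselves yield merely $\max(|C+C|,|C\cdot C|)\gtrsim|C|^{5/4}$, respectively $|C|^{4/3}$ over $\R$. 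The extra input is the self-similarity: smallness of $K$ bounds $|B_{a,d}^{2}/B_{a,d}^{2}|$ for every pivot pair and so forces each $B_{a,d}$ into a multiplicatively structured set; reinserting this through the additive invariance $C=1-C$ and the M\"obius relations linking the $B_{a,d}$ of different pivots contradicts the lower bound $|C|\gg|A|^{2+2/11}$ of Theorem \ref{mish} unless $K,L\gtrsim|C|^{5/19}$. Optimizing the exponents along this chain gives $|C\pm C|,\,|C\cdot C|\gtrsim|C|^{1+\frac{5}{19}}=|C|^{\frac{24}{19}}$.

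\emph{Where the difficulty lies.} The genuine obstacle is that one has no upper bound for $|C|$ in terms of $|A|$ beyond the trivial $|C|\le|A|^{4}$, so the estimate cannot be obtained by lower-bounding $|C\pm C|$ or $|C\cdot C|$ by a power of $|A|$: it must be self-referential in $|C|$. That is exactly why the additive symmetry $x\mapsto 1-x$ and the multiplicative self-similarity have to be used together in one argument, and the delicate part is the bookkeeping of that combination --- which incidence or energy inequality is invoked, over $\R$ or over $\C$, and how the structural constraint on the $B_{a,d}$ (which weakens as $K$ grows polynomially) is fed back in --- that produces the precise exponent $\frac{24}{19}$.
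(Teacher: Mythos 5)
You have correctly isolated the one genuinely important observation, namely the anharmonic symmetries $C=1-C$ and $C=1/C$, but the use you then make of them is not the right one and the argument as sketched does not close. The paper's deduction is much more direct: from $C-1=-C$ one gets $|C\cdot C|=|C(C-1)|$, and from $1/C=C$ one gets $|C\pm C|=|C\pm 1/C|$; each of the sets $C(C-1)$ and $C\pm 1/C$ is then handled by quoting known ``shifted'' expander bounds valid for an \emph{arbitrary} finite real or complex set, namely $|C(C\pm 1)|\gtrsim|C|^{\frac{24}{19}}$ and $|C\pm 1/C|\gtrsim|C|^{\frac{24}{19}}$ (Jones--Roche-Newton \cite{JRN}, Shkredov \cite{Sh1}), which rest only on claim (ii) of Theorem \ref{SoTa}. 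The exponent $\frac{24}{19}$ is simply inherited from those results; no optimisation specific to $C$ is performed, and Theorem \ref{mish} plays no role in this proof.

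The gaps in your route are concrete. First, a sum--product dichotomy (Elekes, Solymosi) only bounds $\max(|C+C|,|C\cdot C|)$ from below, whereas the proposition asserts that \emph{each} of $|C\pm C|$ and $|C\cdot C|$ is large; relating $K$ and $L$ through such inequalities cannot exclude, say, $L=O(1)$ with $K$ enormous, so this branch of the argument cannot deliver the stated conclusion. Second, the step in which ``smallness of $K$ forces each $B_{a,d}$ into a multiplicatively structured set'' and this ``contradicts $|C|\gg|A|^{2+2/11}$'' is not an argument: as you yourself observe, there is no nontrivial upper bound on $|C|$ in terms of $|A|$, so no contradiction with a \emph{lower} bound on $|C|$ can be manufactured this way, and no computation is exhibited that actually produces $\frac{24}{19}$. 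The repair is to abandon the dichotomy and convert each quantity separately into a shifted set via the symmetries --- precisely what the identities $|CC|=|C(C-1)|$ and $|C\pm C|=|C\pm 1/C|$ accomplish --- and then invoke the cited single-set expander bounds.
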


\begin{corollary}  \label{cor2}  For  $A\subset \R/\pi\mathbb Z$, let $D=A-A$ and  $\sin D = \{\sin d:\,d\in D\}.$ Then
$$
|\sin D\cdot\sin D|\gg |D|^{1+\frac{7}{110}}.
$$
\end{corollary}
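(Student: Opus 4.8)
The plan is to recast $\sin D\cdot\sin D$ in terms of cross-ratios and then feed in Theorem~\ref{mish}. Translating $A$ inside $\R/\pi\Z$ by a generic amount alters neither $D=A-A$ nor $\sin D$, so we may assume no element of $A$ equals $\pi/2$; then $A'=\tan A\subset\R$ is a set of $|A|$ real numbers. The identity $\tan\alpha-\tan\beta=\sin(\alpha-\beta)/(\cos\alpha\cos\beta)$ makes every cosine cancel in \eqref{crr}, so that for pair-wise distinct $a,b,c,d\in A$
\[
[\tan a,\tan b,\tan c,\tan d]=\frac{\sin(a-b)\sin(c-d)}{\sin(a-c)\sin(b-d)}.
\]
Each factor here is a nonzero element of $\sin D$, and hence
\[
C[A']\ \subseteq\ \frac{(\sin D)(\sin D)}{(\sin D)(\sin D)}.
\]
One could instead place $A$ on the unit circle via $a\mapsto e^{2ia}$, in which case the cross-ratios are literally these real numbers; but then Theorem~\ref{mish} would only be available up to a logarithmic factor, whereas $A'=\tan A\subset\R$ keeps the clean $\gg$ demanded by the statement.

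Put $S=\sin D\setminus\{0\}$. Since $\sin$ is at most two-to-one on $\R/\pi\Z$ we have $|S|=\Theta(|D|)$, and $|D|^{1/2}\le|A|\le|D|$. Theorem~\ref{mish} gives $|C[A']|\gg|A|^{2+2/11}$. Writing $K=|SS|/|S|$ for the multiplicative doubling of $S$, the Pl\"unnecke--Ruzsa inequality yields
\[
\left|\frac{(\sin D)(\sin D)}{(\sin D)(\sin D)}\right|=|S^{2}S^{-2}|\le K^{4}|S|=\frac{|SS|^{4}}{|S|^{3}},
\]
so $|SS|^{4}\gg|A|^{2+2/11}|S|^{3}$, that is $|SS|\gg|A|^{6/11}|S|^{3/4}$, which on its own forces an increment of $\tfrac1{44}$ in the exponent over $|D|$. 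To improve this one brings in Proposition~\ref{cor1} and the symmetries $C[A']=1-C[A']=1/C[A']$: since $C[A']\cdot C[A']\subseteq S^{4}S^{-4}$, the displayed inequality also gives $|C[A']\cdot C[A']|\le|SS|^{8}/|S|^{7}$, whereas Proposition~\ref{cor1} gives $|C[A']\cdot C[A']|\gtrsim|C[A']|^{24/19}\gg|A|^{576/209}$, and together these yield the second relation $|SS|\gg|A|^{72/209}|S|^{7/8}$.

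It then remains to substitute $|S|=\Theta(|D|)$, eliminate $|A|$ via $|A|\ge|D|^{1/2}$, and optimise the two bounds over $t=\log|D|/\log|A|\in[1,2]$; the binding regime is $t$ near $2$, that is, $A$ close to a Sidon set, and one extracts $|\sin D\cdot\sin D|\gg|D|^{1+7/110}$. I expect the real work to sit in this last step: the purely black-box Pl\"unnecke--Ruzsa passage from the cardinality of $C[A']$ to that of $SS$ is wasteful, and to reach the stated $\tfrac7{110}$ rather than a smaller increment it appears necessary to re-run the arguments behind Theorem~\ref{mish} and Proposition~\ref{cor1} (together with the ideas behind Shkredov's bound $|DD|\gtrsim|D|^{1+1/12}$) directly at the level of the multiplicative energy of $S$, using the additive form of Proposition~\ref{cor1} and the $\lambda\mapsto1-\lambda$ symmetry rather than merely quoting set cardinalities. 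Some routine care is also needed with the pole of $\tan$, with excising $0$ from $\sin D$, and with checking that the distinctness of $a,b,c,d$ makes all four sine factors genuine nonzero elements of $\sin D$.
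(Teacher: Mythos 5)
Your setup is sound and essentially matches the paper's: the paper realises $A$ as a set of directions and uses the sine formula \eqref{tri} for the cross-ratio of the corresponding points on a line, which is exactly your $\tan$-substitution, and both give the inclusion $C\subseteq S^2S^{-2}$ with $S=\sin D\setminus\{0\}$. But there is a genuine gap in the quantitative part, and you have correctly diagnosed where it is: the black-box passage ``Pl\"unnecke upper bound on $S^2S^{-2}$ versus Theorem \ref{mish} lower bound on $|C|$'' only yields $K=|SS|/|S|\gg|A|^{1/22}\ge|D|^{1/44}$, and your enhancement via Proposition \ref{cor1} still lands near $|D|^{0.047}$ (and with a $\gtrsim$, i.e.\ a logarithmic loss, which the stated $\gg$ does not tolerate). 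Neither reaches $\frac{7}{110}$, and the closing paragraph of your proposal is an acknowledgement that the decisive step is missing rather than a proof of it.

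The missing idea is not a refined energy version of Theorem \ref{mish}, but a separate, short incidence argument applied to the \emph{mixed} product set $SC$. The paper uses Lemma \ref{Plun}(i) with the inclusion $SC\subseteq S^3S^{-2}$ to get $|SC|\le K^5|S|$ (note the exponent $5$, not $4$: one extra factor of $S$ is deliberately multiplied in). Then, exploiting the mirror symmetry $C-1=-C$, it considers the $|S|^2$ lines $y=(s^{-1}x-1)s'$, $s,s'\in S$, against the grid $SC\times(-SC)$: each line carries the $|C|$ points $(sc,\,s'(c-1))$, all of which lie in the grid precisely because $S(C-1)=-SC$. Theorem \ref{SoTa}(ii) then gives
$$|S|^2|C|\ll|SC|^{4/3}|S|^{4/3}+|SC|^2,\qquad\text{hence}\qquad |SC|\gg\min\bigl(|C|^{3/4}|S|^{1/2},\,|S||C|^{1/2}\bigr).$$
Comparing with $|SC|\le K^5|S|$ and using $|C|\gg|A|^{24/11}$ together with $|S|\le|A|^2$ yields $K^5\gg|A|^{18/11-1}=|A|^{7/11}$, i.e.\ $K\gg|A|^{7/55}\ge|D|^{7/110}$, with no logarithmic loss since only Theorem \ref{mish} in the real case and Theorem \ref{SoTa}(ii) are invoked. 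In short: the symmetry $C-1=-C$ must be spent inside a Szemer\'edi--Trotter count for $SC$, not merely recorded as $|CC|=|C(C-1)|$ and fed into Proposition \ref{cor1}; without that step the claimed exponent is out of reach by your route.
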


Besides, Theorem \ref{mish} yields a new bound apropos of Conjecture \ref{c1}.
\begin{corollary}  \label{misha}  Let $E$ be a finite set of points in the complex plane, not supported on a single line through the origin. Then $|\omega[E]|\gtrsim |E|^{\frac{96}{137}}$, and $|\omega[E]|\gg |E|^{\frac{96}{137}}$ in the real case.\footnote{The previous bound $|C(|A|)|\gg|A|^2$ yields the exponent $\frac{9}{13}$ for $|\omega[E]|$, see \cite{IRR}.}\end{corollary}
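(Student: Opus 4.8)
The plan is to feed Theorem~\ref{mish} into the reduction of \cite{IRR}, which converts a lower bound for the number of distinct cross-ratios of a scalar set into a lower bound for the number of distinct values of a non-degenerate bilinear form on a point set; fed with the Solymosi--Tardos bound $|C[A]|\gg|A|^2$ this reduction yields the exponent $\tfrac9{13}$ recorded in the footnote, so it suffices to rerun it with the improved input $|C[A]|\gtrsim|A|^{2+2/11}$, together with its log-free version over $\R$. The first step is to pass from $E$ to scalars. Writing $a\in E$ in coordinates $(a_1,a_2)$ and letting $t_a=a_1/a_2\in FP^1$ be a projective coordinate of the line $Oa$, one has $\omega(a,b)=a_2b_2(t_a-t_b)$, so that for any four points of $E$ with pairwise distinct slopes the radial factors cancel:
$$
[t_a,t_b,t_c,t_d]=\frac{\omega(a,b)\,\omega(c,d)}{\omega(a,c)\,\omega(b,d)}.
$$
Hence, with $A:=\{t_a:a\in E\}$, every element of $C[A]$ is a quotient $xy/(zw)$ with $x,y,z,w\in\omega[E]$. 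As $E$ is not supported on a line through the origin, $|A|\ge 2$; a dyadic refinement lets us assume that each slope carries $\approx |E|/\delta$ points of $E$, where $\delta:=|A|$.

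Now one runs the dichotomy of \cite{IRR}. If $\delta$ is small, the most popular slope is realized by $\gg|E|/\delta$ points of $E$ lying on one line through $O$; these differ only by radial scaling, so pairing them off against one further point of $E$ off that line already yields $\gg|E|/\delta$ distinct areas, that is, $|\omega[E]|\gg|E|/\delta$. If $\delta$ is large, one uses the pinned decomposition $\omega[E]=\bigcup_{a\in E}\omega_a[E]$ with $\omega_a[E]=\{\omega(a,b):b\in E\}$ a linear image of $E$: were $|\omega[E]|$ small, every pinned set would be small, forcing $E$ into a grid in which, moreover, the $\delta$ slopes of $E$ appear as the directions of many lines each incident to many points of the grid. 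The incidence/sum-product part of \cite{IRR} then bounds $|C[A]|$ from above by a monomial in $|\omega[E]|$, $\delta$ and $|E|$; comparing with $|C[A]|\gtrsim\delta^{2+2/11}$ bounds $|\omega[E]|$ below by a power of $\delta$. Balancing this against $|\omega[E]|\gg|E|/\delta$ and optimizing in $\delta$ gives $|\omega[E]|\gtrsim|E|^{96/137}$, and $\gg|E|^{96/137}$ over $\R$, where Theorem~\ref{mish} carries no logarithmic loss.

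The substantive work, and the main obstacle, is reproducing the \cite{IRR} reduction with the correct bookkeeping: the monomial upper bound for $|C[A]|$ in terms of $|\omega[E]|$, $\delta$, $|E|$ (and the number of grid-rich lines) has to be stated precisely, and the final optimization in $\delta$ rerun from scratch, since the optimal $\delta$ moves when the cross-ratio exponent is raised from $2$ to $2+\tfrac2{11}$. One should also verify that the degeneracies handled in \cite{IRR} --- pairs of points collinear with $O$ (which give $\omega=0$ and are discarded) and the $\gtrsim$ loss from the dyadic refinement --- are absorbed as before, so that, exactly as in \cite{IRR}, the final estimate is clean over $\R$ and picks up only the logarithm inherited from Theorem~\ref{mish} over $\C$.
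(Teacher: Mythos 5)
Your proposal is essentially the paper's own proof: the paper simply defers to \cite[proof of Theorem 1, Lemma 10]{IRR}, i.e., it reruns that reduction with the input $|C[A]|\gg|A|^2$ replaced by Theorem \ref{mish}, which is exactly what you describe, and your key identity expressing the cross-ratio of slopes as a ratio of products of areas is the paper's \eqref{tri}. Like the paper, you leave the actual bookkeeping and the re-optimization producing the exponent $\frac{96}{137}$ to \cite{IRR}, so the two arguments coincide in substance.
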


In the last section of this note we state and prove a few more results relating Conjecture \ref{c1} and growth under addition/multiplication, some of which are conditional on the conjecture.

\section{Proofs of main results}
It suffices to present the proofs in the complex setting: they rely primarily on the forthcoming Theorem \ref{SoTa} due to Solymosi and Tardos \cite{ST}, with in particular, the values of  hidden constants being quite reasonable\footnote{In the real setting Theorem \ref{SoTa} goes back to the original Szemer\'edi-Trotter theorem \cite{SzT} and its adaptation for curves by Pach and Sharir \cite{PS}.}. 
In the special case of reals there is a stronger result by Sharir and Solomon \cite{SS}, it is using this result that enables one to get rid of the logarithmic factor.

The rest of the arguments do not distinguish between real and complex numbers, unless stated. Thus ``a finite set'' means ``a finite set of real or complex numbers'', ``the plane'' means $\R^2$ or $\C^2$.

\subsubsection*{Preliminaries} We start out with the incidence and additive combinatorics tools required for our purposes and then outline the proof strategy.

\begin{theorem} \label{SoTa} Let $A$ be a finite set.

(i)  A M\"obius transformation $\tau$ is called $k$-rich if there are at least $k$ distinct quadruples of pair-wise distinct points $a,b,c,d\in A$ that $\tau$ maps into $A$. Let $M_k(A)$ be the set of $k$-rich M\"obius transformations on $A$. Then
\begin{equation}\label{mk}
|M_k(A)| \ll \frac{|A|^6}{k^5},
\end{equation}
for $A\subset \mathbb C$ and
\begin{equation}\label{mks}
|M_k(A)| \ll \frac{|A|^6\log k}{k^{\frac{11}{2}}},
\end{equation}
for $A\subset \mathbb R.$
(ii) The number of incidences $I$ between the point set $A\times A$ and a set of $N$ lines in the plane satisfies
$$I\ll |A|^{\frac{4}{3}}N^{\frac{2}{3}}+|A|^2 + N.$$\end{theorem}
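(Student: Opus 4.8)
The plan is to treat both parts as problems in incidence geometry. Part (ii) is immediate: it is the Szemer\'edi--Trotter theorem (over $\R$), or its complex analogue (over $\C$), applied to the Cartesian point set $A\times A$, which has $|A|^2$ points; the generic form $I\ll|P|^{2/3}N^{2/3}+|P|+N$ of that theorem specialises at once to the asserted inequality, so here I would just quote it.

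Part (i) is the substantial statement, and the plan is to deduce it from an incidence bound for a family of conics. First, encode each non-identity M\"obius map $\tau\colon x\mapsto\frac{\alpha x+\beta}{\gamma x+\delta}$ by its graph
\[
\Gamma_\tau=\{(x,y):\ \gamma xy+\delta y-\alpha x-\beta=0\},
\]
a conic, which degenerates to a line precisely when $\gamma=0$. As $[\alpha:\beta:\gamma:\delta]$ runs over $\mathbb{P}^3$ these curves form a three-parameter family with the following properties: each member passes through the two points $[1:0:0]$ and $[0:1:0]$ at infinity; two distinct members meet in at most two affine points (their intersection is governed by the fixed points of one map composed with the inverse of the other); and any three points $(a_1,b_1),(a_2,b_2),(a_3,b_3)$ with the $a_i$ distinct and the $b_i$ distinct lie on exactly one member, this last being the familiar fact that a M\"obius map is determined by the images of three points. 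The dictionary is then that $\tau$ carries $m$ points of $A$ into $A$ exactly when $\Gamma_\tau$ meets the grid $A\times A$ in $m$ points. Writing $N(t)$ for the number of members of the family through at least $t$ points of $A\times A$, one has $|M_k(A)|\ll N(k)$ (the condition defining $M_k(A)$ unwinds into a condition on the number of grid points carried by $\Gamma_\tau$), so it suffices to prove $N(t)\ll|A|^6/t^{5}$ over $\C$ and $N(t)\ll|A|^6\log t/t^{11/2}$ over $\R$.

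The incidence input is the crux. Over $\R$ I would feed the family into the Pach--Sharir extension of Szemer\'edi--Trotter to families of curves with bounded degrees of freedom: with the $|A|^2$ points of the grid and $C$ of the conics it gives $I\ll|A|^{6/5}C^{4/5}+|A|^2+C$, and the standard deduction then produces $N(t)\ll|A|^6/t^{5}+|A|^2/t\ll|A|^6/t^{5}$, the second term never winning since $t\le|A|$. Upgrading the exponent $5$ to $\tfrac{11}{2}$, at the unavoidable cost of the logarithm, calls for a sharper incidence estimate tailored to this special family of conics, of the kind later developed by Sharir--Solomon; I expect this refinement to be the main technical obstacle. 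Over $\C$ there is no elementary Szemer\'edi--Trotter to lean on, and the only cheap bound at hand, the exact identity
\[
\sum_{\tau}m_\tau(m_\tau-1)(m_\tau-2)=\bigl[\,|A|(|A|-1)(|A|-2)\,\bigr]^2\asymp|A|^6
\]
(valid because each ordered pair of distinct triples in $A$ is realised by a unique M\"obius map), only yields the inferior $N(t)\ll|A|^6/t^{3}$; to reach $N(t)\ll|A|^6/t^{5}$ over $\C$ one reruns the incidence argument with the complex analogues of these incidence theorems, equivalently by passing to $\R^4$ and using a real incidence bound for points and two-dimensional surfaces there. Substituting the resulting bounds on $N(t)$ into $|M_k(A)|\ll N(k)$ gives \eqref{mk} and \eqref{mks}.

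The difficulty is concentrated entirely in this incidence geometry. For the group of similarities $z\mapsto\alpha z+\beta$ the graph of the map is a line, so bare Szemer\'edi--Trotter settles the corresponding count (this is Elekes's argument for sums and products); here, by contrast, the pole of $\tau$ roams over a continuum, so no substitution linearises the picture and one is forced to confront the three-parameter family of conics directly. Extracting the sharp real exponent $\tfrac{11}{2}$, and getting any non-trivial bound at all over $\C$, are precisely the delicate points.
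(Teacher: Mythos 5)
The paper does not actually prove this theorem: it is quoted as an external result, with (i) over $\C$ and (ii) attributed to \cite[Theorems 4 and 3]{ST} respectively, and the sharper real bound \eqref{mks} to Sharir--Solomon \cite{SS}. Measured against that, your sketch correctly sets up the incidence framework (graphs of M\"obius maps as a three-parameter family of hyperbolae through two fixed points at infinity, any two meeting in at most two affine points, any three suitable grid points determining one member), and your Pach--Sharir computation does yield $N(t)\ll|A|^6/t^5$ for \emph{real} $A$; together with quoting Szemer\'edi--Trotter for (ii) over $\R$, that part is complete. One small slip: under the literal definition in the statement ($k$ counts \emph{quadruples}, so a $k$-rich map carries only $\gtrsim k^{1/4}$ points of $A$ into $A$) the inclusion behind $|M_k(A)|\ll N(k)$ runs the wrong way; the paper's own use of the bound in \eqref{pent} shows that $k$ is really meant to be the number of points, which is the reading you adopt, so this is a defect of the statement rather than of your argument.

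The genuine gaps are exactly where you flag them, but they are the content of the theorem as stated. Both \eqref{mk} and claim (ii) are asserted for \emph{complex} $A$, and that is precisely what Solymosi--Tardos prove; your proposed route --- pass to $\R^4$ and invoke an incidence bound for points and two-dimensional surfaces --- is not theirs and does not obviously close, since point--surface bounds in $\R^4$ of Szemer\'edi--Trotter strength carry $\epsilon$-losses and require extra non-degeneracy hypotheses, and a three-degrees-of-freedom version for curves in $\C^2$ is not off the shelf. Instead \cite{ST} argue directly in $\C\cong\R^2$, exploiting that M\"obius maps send circles to circles (preserving or swapping interior and exterior) and running a Sz\'ekely-type crossing/cutting argument on circle arrangements; this is recalled in Remark \ref{remarka}. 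Likewise the exponent $\tfrac{11}{2}$ in \eqref{mks} is not obtained by sharpening the conic-family incidence bound but by converting rich M\"obius maps into rich planes with respect to the point set $\{(a,a',aa')\}$ on the variety $z=xy$ in $\R^3$ (via the determinant identity that reappears in Lemma \ref{warmup}) and applying the Sharir--Solomon point--plane theorem. So your proposal proves the real case of \eqref{mk} and correctly localises, but does not supply, the two hard ingredients.
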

Strictly speaking, the claim (ii) is not explicit in \cite{ST} but follows immediately from Theorem 3 therein, while (i) is  \cite[Theorem 4]{ST}. 


\begin{remark} \label{remarka} The argument in \cite{ST} also holds if the plane set of lines in claim (ii) of Theorem \ref{SoTa} is replaced by hyperbolae (corresponding to  M\"obius transformations) in the form $xy-\alpha x+\delta y -\beta=0$, where the finite set of parameter values $(\alpha,\delta,\beta)$ comes from a two, rather than three-dimensional family, that is when  two distinct points may belong to no more than two hyperbolae in the family. Then one has the following estimate 
\begin{equation}\label{mkk}
|M_k(A)| \ll  \frac{|A|^4}{k^3},
\end{equation}
for the number $M_k$ of $k$-rich lines or hyperbolae.
This is a particular case of the Szemer\'edi-Trotter theorem for real curves, which follows from the renown crossing-number proof of Sz\'ekely \cite{Sz}, but the -- easy and elegant --  arguments in \cite{ST} work in the complex setting, being based on M\"obius transformations in $\C\cong \R^2$ mapping circles into circles and either preserving or swapping their interior and exterior.\end{remark}

\medskip
We will also use the well-known Pl\"{u}nnecke inequality, mostly in the standard Ruzsa form  \cite{Ruz}, and once  ``thin'' form by Katz and Shen \cite{KS}. The state-of-the-art proof is due to Petridis \cite{Pet}. 

\begin{lemma} \label{Plun} Let $A$ be a subset of an abelian group $(G,+)$, suppose $|A+A|=K|A|$  and $k\geq 1,\,l\geq 0$ are integers. Then 

(i) one has ${\displaystyle |kA-lA| \leq  K^{k+l}|A|}$,

(ii) there exists $A'\subseteq A$, with $|A'|\geq .9|A|$ and such that $|A'+kA| \ll K^k|A|.$
\end{lemma}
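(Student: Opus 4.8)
The plan is to reprove part (i) by Petridis's graph-free method \cite{Pet} and to obtain part (ii) from the Katz--Shen ``thin'' refinement \cite{KS}; both are standard, so I will only indicate the key steps, spending most effort on part (i).

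The engine for (i) is \emph{Petridis's lemma}: choose a nonempty $X\subseteq A$ minimising $|Z+A|/|Z|$ over nonempty $Z\subseteq A$ and set $K'=|X+A|/|X|$, so that $K'\le K$ (since $X=A$ is admissible) and $|Z+A|\ge K'|Z|$ for every $Z\subseteq X$; then $|X+A+C|\le K'|X+C|$ for every finite $C$. I would prove this by induction on $|C|$, the case $|C|\le 1$ being the definition of $K'$. For $C=C'\cup\{\gamma\}$ with $\gamma\notin C'$: every point of $X+A+C$ lying outside $X+A+C'$ belongs to the translate $X+A+\gamma$, whose intersection with $X+A+C'$ contains $Z+A+\gamma$, where $Z=\{x\in X:\ x+A+\gamma\subseteq X+A+C'\}$; since $|X+A|=K'|X|$ and $|Z+A|\ge K'|Z|$, the number of these new points is at most $K'(|X|-|Z|)$, so $|X+A+C|\le|X+A+C'|+K'(|X|-|Z|)$. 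Meanwhile $|X+C|=|X+C'|+|X|-|W|$ with $W=\{x\in X:\ x+\gamma\in X+C'\}$, and $W\subseteq Z$ (if $x+\gamma=x'+c'$ with $x'\in X$, $c'\in C'$, then $x+A+\gamma=A+x'+c'\subseteq X+A+C'$), so $|X+C|\ge|X+C'|+|X|-|Z|$; combining with the inductive bound $|X+A+C'|\le K'|X+C'|$ closes the induction.

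Part (i) now follows quickly. Iterating the lemma along $C=A,2A,3A,\dots$ gives $|X+jA|\le K'^{\,j}|X|$ for all $j\ge 1$, hence $|jA|\le|X+jA|\le K'^{\,j}|X|\le K^{j}|A|$ (using $K'\le K$ and $|X|\le|A|$, and that $jA$ is, up to translation, a subset of $X+jA$). For the difference set I would use Ruzsa's triangle inequality in the form $|U|\,|V-W|\le|U+V|\,|U+W|$ --- proved by the injection $(u,\,d=v_d-w_d)\mapsto(u+v_d,\,u+w_d)\in(U+V)\times(U+W)$, a fixed representation $d=v_d-w_d$ being chosen for each $d$ --- applied with $U=X$, $V=kA$, $W=lA$. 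This yields $|X|\,|kA-lA|\le|X+kA|\,|X+lA|\le K^{k}|X|\cdot K^{l}|X|$, that is, $|kA-lA|\le K^{k+l}|X|\le K^{k+l}|A|$, the case $l=0$ being the sumset bound just established. That gives (i).

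For (ii) the obstruction is that the minimiser $X$ above need not be large, so one needs the large-subset strengthening of \cite{KS}: there exists $A'\subseteq A$ with $|A'|\ge 0.9|A|$ and $|A'+kA|\le C_k K^k|A|$, the constant $C_k$ depending only on $k$. I would prove it by a removal procedure: while some subset of $A$ witnesses $k$-fold sumset growth disproportionately above the Plünnecke rate, delete it from $A$; a weight-counting argument bounds the total deleted mass below $0.1|A|$, and the survivor $A'$ then satisfies the required bound up to the harmless constant $C_k$. Since (ii) enters the sequel only as a black box, quoting \cite{KS} directly is equally acceptable. The main obstacle throughout is bookkeeping rather than ideas: in (i) one must collect precisely $k+l$ factors of $K$ (which is exactly what Petridis's lemma buys over the classical Plünnecke graph argument), and in (ii) one must ensure that the passage to a large subset costs only a $k$-dependent constant and not an extra power of $K$.
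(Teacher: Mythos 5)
Your proposal is correct: the paper states this lemma without proof, citing Ruzsa, Katz--Shen and Petridis, and your argument is precisely the standard Petridis compression proof for (i) (combined with the Ruzsa triangle inequality for the difference set) together with the Katz--Shen refinement for (ii), which is exactly what those citations point to. The induction on $|C|$, the containment $W\subseteq Z$, and the bookkeeping of the $k+l$ factors of $K$ are all handled correctly.
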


\medskip
A few preliminaries about M\"obius transformations. Let us for the future call two $s$-tuples of pair-wise distinct elements $a,b,c,d,\ldots$ and $a',b',c',d',\ldots$ of $A$ {\em congruent} if there is a M\"obius transformation $\tau$ taking $a$ to $a'$, $b$ to $b'$, $c$ to $c'$, $d$ to $d'$, and so on. Clearly, for $s=3$ any two triples of distinct elements of $A$ are congruent. 

Note that the statement (i) of Theorem \ref{SoTa} implies that $|C[A]|\gg |A|^2$. Indeed, it follows from \eqref{mk} (by analogue with forthcoming estimate \eqref{pent}) that the the number $Q$ of congruent quadruples is $Q=O(|A|^6)$. Two congruent quadruples yield the same cross-ratio $[a,b,c,d]=[a',b',c',d']$. Therefore, by the Cauchy-Schwarz inequality, the number of congruence classes 
$$
|C[A]| \gg \frac{|A|^8}{Q}\gg |A|^2.
$$
The problem with the estimate for the quantity $Q$ coming from Theorem \ref{SoTa} is that this estimate is hardly sharp (for quadruples of pair-wise {\em distinct} elements of $A$). The statement (i) of Theorem \ref{SoTa} is ideally suited for estimating the number of pairs of congruent {\em pentuples} in $A$, which we denote as $P$. After dyadic summation in the M\"obius transformation richness parameter $k$ we use \eqref{mk}, getting
\begin{equation}
P\ll \sum_{j=2}^{\lceil \log |A|\rceil } |M_{2^j}(A)| (2^j)^5 \ll |A|^6\log |A|.
\label{pent}
\end{equation}
In the real case we use the stronger bound \eqref{mks}, which  improves the latter estimate to
\begin{equation}
P\ll |A|^6.
\label{pentt}
\end{equation}

This estimate is sharp, say for $A=[1,\ldots,|A|]$ for it counts, in particular, pentuples of points in $A\times A$, which are collinear on non-horizontal and non-vertical lines, corresponding to affine transformations (which are also M\"obius transformations).

Unfortunately, the estimate for the quantity $P$ itself is not directly suitable for getting a lower bound on $|C[A]|$. To benefit by it, one needs to take advantage of certain algebraic relations that the cross-ratio satisfies. Properties of this type have earlier been used in the paper by Iosevich, Roche-Newton \cite{IRR} and the author, but the particular way of using the ones in this paper, namely the forthcoming formulae \eqref{one}, \eqref{two}, is due to Shkredov \cite{Sh}  who dealt with the set 
\begin{equation} \label{ar} R[A]:=\left\{\frac{a-b}{a-c}=[a,b,c,\infty]:\,\mbox{distinct }a,b,c\in A \right\}\end{equation} of cross-ratios pinned at infinity. 

These algebraic properties of the cross-ratio, together with trying to get the best out of incidence theorems constituted the leitmotif of the recent hefty paper by Murphy, Petridis, Roche-Newton, Shkredov, and the author, which dealt with similar questions in fields of positive characteristic. 
The following proof of Theorem \ref{mish} morally follows the scheme in \cite[proof of Theorem 3]{57}, being technically easier, for one does not have to worry about the characteristic of the field.

We start with a well-known fact, with a quick proof for completeness sake.
\begin{lemma} Suppose $[a,b,c,d]=[a',b',c',d']$ for two quadruples of pair-wise distinct scalars. Then there is a M\"obius transformation $\tau$ taking $a$ to $a'$, $b$ to $b'$, $c$ to $c'$ and $d$ to $d'$.
\label{warmup}\end{lemma}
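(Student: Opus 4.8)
The statement to prove is Lemma~\ref{warmup}: if two quadruples of pairwise distinct scalars have equal cross-ratio, then a single M\"obius transformation carries the first quadruple to the second, entry by entry.

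\medskip

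\textbf{The plan.} I would reduce to a normal form using the three-transitivity of the M\"obius group on $FP^1$. Given the first quadruple $a,b,c,d$, there is a unique M\"obius transformation $\sigma$ sending $a\mapsto \infty$, $b\mapsto 0$, $c\mapsto 1$; one checks directly from the definition \eqref{crr} that $\sigma(d)=[a,b,c,d]$ (this is the standard fact that the cross-ratio is the image of the fourth point under the map normalising the first three). Likewise there is a unique $\sigma'$ sending $a'\mapsto\infty$, $b'\mapsto 0$, $c'\mapsto 1$, and then $\sigma'(d')=[a',b',c',d']$. Since the two cross-ratios are assumed equal, $\sigma(d)=\sigma'(d')$, so $\sigma$ and $\sigma'$ agree on all four points $a,b,c,d$ and $a',b',c',d'$ respectively in the sense that $\sigma'^{-1}\circ\sigma$ takes $a\mapsto a'$, $b\mapsto b'$, $c\mapsto c'$, $d\mapsto d'$. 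Taking $\tau=\sigma'^{-1}\circ\sigma$ finishes the proof, and $\tau$ is a M\"obius transformation because M\"obius transformations form a group.

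\medskip

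\textbf{Steps in order.} First I would recall/establish the transitivity statement: for any ordered triple of pairwise distinct points of $FP^1$ there is exactly one M\"obius transformation sending them to $\infty,0,1$ — existence by writing the explicit formula $z\mapsto \frac{(z-b)(c-a)}{(z-a)(c-b)}$ (with the obvious modification if one of $a,b,c$ is $\infty$), uniqueness because a M\"obius transformation fixing $\infty,0,1$ is the identity. Second, I would observe that plugging $z=d$ into that explicit formula yields exactly $[a,b,c,d]$, comparing with \eqref{crr}. Third, I would run the same construction for the primed quadruple. Fourth, I would use the hypothesis $[a,b,c,d]=[a',b',c',d']$ to conclude $\sigma(d)=\sigma'(d')$ and hence that $\sigma^{-1}\circ\sigma'$... more precisely $\sigma'^{-1}\circ\sigma$ maps the whole first quadruple onto the second. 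Set $\tau:=\sigma'^{-1}\circ\sigma$.

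\medskip

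\textbf{Main obstacle.} There is no real obstacle here — this is the elementary direction of the characterisation already quoted in the introduction. The only points requiring a little care are bookkeeping ones: handling the case where one of the eight scalars equals $\infty$ (harmless, since we work on $FP^1$ and the formulas extend), and making sure one states the normalisation map and checks the identity $\sigma(d)=[a,b,c,d]$ against the sign/ordering convention fixed in \eqref{crr}. Since the paper only needs the existence of \emph{some} such $\tau$, even uniqueness of the normalising maps is not strictly necessary; existence of $\sigma,\sigma'$ plus the equality of cross-ratios suffices.
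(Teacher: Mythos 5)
Your proof is correct, but it is not the route the paper takes. You use sharp three-transitivity of the M\"obius group: normalise each quadruple by the unique map sending its first three entries to a fixed reference triple, observe that the image of the fourth entry is then determined by the cross-ratio, and compose. The paper instead rewrites the equality $[a,b,c,d]=[a',b',c',d']$ as the vanishing of the $4\times 4$ determinant with rows $(1,1,1,1)$, $(a,b,c,d)$, $(a',b',c',d')$, $(aa',bb',cc',dd')$; a linear dependence among the rows hands you coefficients $\alpha,\beta,\gamma,\delta$ that are read off directly as the matrix of $\tau$, and simultaneously shows that the four points $(a,a'),(b,b'),(c,c'),(d,d')$ lie on the hyperbola $y(\gamma x+\delta)-(\alpha x+\beta)=0$, with pairwise distinctness ruling out the degenerate case $\alpha\delta-\beta\gamma=0$. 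Your argument is the cleaner classical one and avoids any degeneracy check (a composite of M\"obius maps is automatically M\"obius); the paper's determinant computation is chosen because it produces the matrix of $\tau$ explicitly and exhibits the congruent quadruple as an incidence between points of $A\times A$ and a hyperbola of the form $xy-\alpha x+\delta y-\beta=0$, which is precisely the picture exploited in Theorem 2 and Remark 3. One small point to fix in your write-up: with the paper's convention \eqref{crr}, the map $z\mapsto[a,b,c,z]$ sends $(a,b,c)$ to $(1,\infty,0)$, not to $(\infty,0,1)$, so your displayed normalising map $z\mapsto\frac{(z-b)(c-a)}{(z-a)(c-b)}$ does not literally satisfy $\sigma(d)=[a,b,c,d]$. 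This is harmless -- $\sigma(d)$ is in any case a fixed M\"obius function of the cross-ratio, the same function for both quadruples, so equality of cross-ratios still forces $\sigma(d)=\sigma'(d')$ -- and you flagged the convention check yourself, but the normalisation should be stated consistently.
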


\begin{proof} The condition $[a,b,c,d]=[a',b',c',d']$, using the fact that elements within each quadruple are pairwise distinct, can be rearranged as 
$$\det\left(\begin{array}{cccc} 1&1&1&1\\a&b&c&d\\a'&b'&c'&d'\\aa'&bb'&cc'&dd'\end{array}\right)=0.$$ This means that the rows, from top to bottom, of the above matrix are linearly dependent with some coefficients, say $-\beta,\,-\alpha,\,\delta,\gamma$, respectively. It follows that the sought for M\"obius transformation $\tau$ is given by the matrix $\left( \begin{array}{cc}\alpha&\beta\\ \gamma &\delta\end{array}\right)$, and the four points $(a,a'),\,(b,b'),\,(c,c'),\,(d,d')$ in the plane all lie on the hyperbola
$y(\gamma x+ \delta)-(\alpha x+\beta) = 0.$
If the determinant of the latter $2\times 2$ matrix is zero, this means that the hyperbola degenerates into a horizontal or vertical line, which is not allowed, since elements in each quadruple are pair-wise distinct. Thus one can normalise the matrix by unit determinant. \end{proof}

\begin{proof}[Proof of Theorem \ref{mish}]

Consider a pentuple $a,b,c,d,e\in A$, where all elements are pair-wise distinct (for the rest pair-wise distinctness is implicit). Set 
$$
x=[a,b,c,d],\qquad y = [a,b,c,e].
$$
Knowing $x,y$ fixes the pentuple up to a congruence via some M\"obius transformation $\tau$. Indeed, if two pentuples are congruent, they yield the same values of cross-ratios $x,y$. Conversely if $x=[a,b,c,d]=[a',b',c',d']$ and $y=[a,b,c,e] = [a',b',c',e']$, then, by Lemma \ref{warmup} some M\"obius transformation  $\tau_x$ takes $a,b,c,d$ to, respectively, $a',b',c',d'$  and some $\tau_y$ takes $a,b,c,e$ to, respectively, $a',b',c',e'$. Since $\tau_x$ and $\tau_y$ agree on $a,b,c$, they are equal.

Furthermore, one has the following algebraic relations:
\begin{equation}\label{one}
\frac{x}{y} = \frac{(a-b)(c-d)}{(a-c)(b-d)}  \frac{(a-c)(b-e
)}{(a-b)(c-e)} = \frac{(d-c)(b-e)}{(d-b)(c-e)} =  [d,c,b,e]\in C[A],
\end{equation}
as well as
\begin{equation}\label{two}
\frac{x-1}{y-1} =  \frac{(a-d)(c-b)}{(a-c)(b-d)}  \frac{(a-c)(b-e
)}{(a-e)(c-b)} = \frac{(a-d)(e-b)}{(a-e)(d-b)} = [a,d,e,b]\in C[A].
\end{equation}

Hence, define $G\subset C[A]\times C[A]$ as follows:

$$G = \{(x,y): \;\exists \,a,b,c,d,e\in A:\, x=[a,b,c,d], y=[a,b,c,e]\}$$ and set
$p(x,y)$ to be the number of congruent pentuples $a,b,c,d,e$ yielding some $(x,y)\in G$. Then one has the identity, cf. \eqref{pent}:
$$
\sum_{(x,y)\in G} p^2(x,y) = P \ll |A|^6 \log|A|.
$$ 
The same estimate without the logarithm holds if we use \eqref{pentt} in the real case.

By the Cauchy-Schwarz inequality, it follows that
\begin{equation}
|A|^{10}\ll
\left(\sum_{(x,y)\in G} p(x,y)\right)^2\leq   |G| \sum_{(x,y)\in G} p^2(x,y)  \ll  |G||A|^6\log|A|,
\label{cs}\end{equation}
and without the logarithm in the real case.

It remains to get an estimate on $|G|$. Note that a trivial lower bound for $|G|$ is $|C[A]||A|$, since for every $x \in C[A]$ we can fix a representation of $x$, so in particular, $a,b,c$, and vary $e$ over $A$, thus getting different values of $y$. If this were also an upper bound on $|G|$, it  would yield the Holy Grail inequality $C[A] \gg |A|^3/\log|A|,$ and, in fact, without the log in the real case.

One has to be content with less. Partition the set $G$ into $G'$ and $G''$, where $G'$ is defined as the subset of $(x,y)\in G$, such that for every $x\in C[A]$ there are at most $tC[A]|$ distinct values of $y\in C[A]$ with $(x,y)\in G$. The small $0<t<1$ is to be chosen later.

Clearly
\begin{equation}\label{small}
|G'|\leq t |C[A]|^2.
\end{equation}
To estimate the size of the complement $G''$ of $G'$ in $G$, let us use \eqref{one} and \eqref{two}.
Namely, by \eqref{two}, we have, for some $r\in C[A]$ that
$$
x-1 = r(y-1).
$$
Furthermore, by \eqref{one} and the definition of $G''$, for every $x$ there are at least $t |C[A]|$ values of pairs $(r',y')\in C[A]\times C[A]$, such that $x=r'y'$. Thus
for every $(x,y)\in G''$ there are at least $t |C[A]|$ solutions of the four-variable equation 
$r'y' - r(y-1) =1$, with the variables in  $C[A]$. I.e.,
 $$
|G''|\leq \frac{1}{t |C[A]|} I,
$$
where 
$$I=| \{(r,r',y,y')\in C[A]\times C[A]\times C[A]\times C[A]:\; r'y' - r(y-1) =1\}|. $$
By the claim (ii) of Theorem \ref{SoTa} we have $I\ll |C[A]|^{\frac{8}{3}},$ so $|G''|\leq t^{-1}|C[A]|^{\frac{5}{3}}.$

Optimising with \eqref{small} leads to choosing $t = |C[A]|^{-\frac{1}{6}}$, thus 
$$
|G|\ll |C[A]|^{\frac{11}{6}}.
$$
Substituting this into \eqref{cs} completes the proof of Theorem \ref{mish}.
\end{proof}

\begin{proof}[Proof of Proposition \ref{cor1}]
The claim follows from the following properties: $C-1 = -C$, as well as $1/C=C$. Thus $|CC|=|C(C-1)|$, as well as $|C\pm C| = |C\pm 1/C|$. It is well known that for any real or complex finite set $C$,
$$
|C(C\pm1)|,\,|C\pm1/C|\;\gtrsim \;|C|^{\frac{24}{19}},
$$
and this estimate relies only on the claim (ii) of Theorem \ref{SoTa} (see also Remark \ref{remarka}) and additive combinatorics arguments. See e.g. \cite[Theorem 1.2]{JRN}, \cite[Corollary 16]{Sh1}.
\end{proof}

\begin{proof}[Proof of Corollary \ref{cor2}]

Take $l$ to be a line not through the origin $O$, let $A$ be a set of points on $l$ which we identify with a set of scalars, viewing $l$ as a copy of $\R.$
Given four points $a,b,c,d\in A,$ observe that their cross-ratio
\begin{equation}
[a,b,c,d] = \frac{\sin\widehat{Oab}\,\sin\widehat{Ocd}}{\sin\widehat{Oac}\,\sin\widehat{Obd}},
\label{tri}\end{equation}
where, say $Oab$ is the signed angle from the origin between the directions to $a$ and $b$ as points in $\R^2$ on $l$. Let $\Phi\subset \R^2/ \pi\mathbb Z$ be the set of directions from $O$ to the points of $A$, then $|\Phi-\Phi|=|A-A|$ and \begin{equation}\label{incl}C[A]\subseteq (\sin(\Phi-\Phi)\cdot \sin(\Phi-\Phi))/(\sin(\Phi-\Phi)\cdot \sin(\Phi-\Phi)).\end{equation}

Let $S=\sin(\Phi-\Phi)\setminus\{0\}$. Suppose $|SS|=K|S|$. By claim (i) of Lemma \ref{Plun} and inclusion \eqref{incl},
\begin{equation}\label{alm} |SC|\leq K^5|S|.\end{equation}

Consider the set of lines in $\R^2$ in the form $y=(s^{-1}x -1)s'$, with $s,s'\in S$ and its incidences with the set $SC\times (-SC)$. Since $C-1 = -C$, every line supports at least $|C|$ points, with $x=sc$, for any $c\in C$.

Hence, by the claim (ii) of Theorem \ref{SoTa}
\begin{equation}\label{stap}
|S|^2|C|\ll |SC|^{\frac{4}{3}}|S|^{\frac{4}{3}} + |SC|^{2}.
\end{equation}
It follows that 
\begin{equation}\label{itr}
|SC|\gg \min \left( |C|^{\frac{3}{4}}|S|^{\frac{1}{2}}, \; |S||C|^{\frac{1}{2}}\right).
\end{equation}
Thus by \eqref{alm}
$$
K^5 \gg \min\left( |C|^{\frac{3}{4}}|S|^{-\frac{1}{2}}, |C|^{\frac{1}{2}}\right).
$$
Using now Theorem \ref{mish} (in the real case) and the worst possible case $|S|\gg|A|^2$, we conclude that  
$K \gg |A|^{\frac{7}{55}} \geq |D|^{\frac{7}{110}},$
as claimed.
\end{proof}

\begin{remark} 
Observe that in the above proof of Corollary \ref{cor2} the only thing that has been used about the set $C$ is its mirror symmetry with respect to $\frac{1}{2}$. An additive version of the estimate \eqref{stap} is easily obtained by using symmetry of $C$ to taking the reciprocal and Remark \ref{remarka}. Thus one can show, for instance, that
$$ \begin{aligned}
| \,[ C+(C-C) ] \cdot [ C+(C-C) ] \,| & \gtrsim |C+(C-C)|^{\frac{24}{19}} \gg \left(|C|^{3/4}|C-C|^{1/2}\right)^{\frac{24}{19}} \\
& \gtrsim |C|^{\left(\frac{3}{4}+\frac{12}{19})\cdot\frac{24}{19}\right)} 
\\ & \gtrsim |A|^{\frac{15120}{3971}},\end{aligned}
$$
the latter lower bound thus being valid for the set $|5CC-4CC|$ due to the obvious inclusion of the left-hand side.

Similar bounds illustrating superquadratic growth can be derived using the set $R[A]$ of pinned cross-ratios determined by \eqref{ar} (via a M\"obius transformation they can be pinned to any point, not necessarily infinity). The latter set satisfies the weaker then in Theorem \ref{mish} yet  sharp up to the logarithmic factor bound $|R[A]|\gg |A|^2\log^{-1}|A|,$ which would change the rightmost term in the latter inequality to  $|A|^{\frac{1260}{361}}$.

This contributes to the leitmotif of this note that sum-product sets of cross-ratio sets $C[A],R[A]$ are more prone to grow than of just $A$, where one may have the  upper bound $k|AA|\leq C(k)|A|^2$. \end{remark}

For the proof of  Corollary \ref{misha} see \cite[proof of Theorem 1, Lemma 10]{IRR}.

\section{More on Conjecture \ref{c1}} Suppose, one believes in the polynomial ad infinitum growth by multiplication of any nonempty set $C[A]$ for $A\subset \R$, as well as  $\sin(\Phi-\Phi)$, where $\Phi\subset \R/\pi \mathbb Z$ is the set of more than two angular directions from the origin, as well as in  Conjecture \ref{c1}. Take $\Phi=\Phi[E]$, the set of directions from the origin to the points of some point set $E\subset \R^2$. Then the set $\omega[E]$  of triangle areas determined by pairs of elements of $E$ also should grow polynomially ad infinitum under multiplication, provided that $|E| $ and $|\Phi|$ are polynomially related. This follows by the multiplicative Pl\"unnecke inequality.  Indeed, formula \eqref{tri} can be interpreted as the cross-ratio of four triangle areas, rooted at the origin, the triangles being defined by any four points $e_a,e_b,e_c,e_d\in E$, lying respectively in the directions identified by $a,b,c,d$. 

If $|\Phi[E]|=O(1),$ however, then one can simply take a long geometric progression $T$ and take $E$ as a union of $|T||\Phi|$ points on $|\Phi|$ lines, the distance from each point to the origin being in $T$, in which case clearly $|\omega[E]\cdot\omega[E]|\ll |\omega[E]|$.  This brief section contains some easy partial or conditional results in this vein.

The first one follows from Theorem \ref{mish} and Proposition \ref{cor1} and shows that if the set $\omega[E]$ has a $O(1)$ multiplicative growth constant, then $E$ should determine fairly few, that is $|\Phi[E]|\lesssim |\omega[E]|^{\frac{11}{36}}$ directions.

\begin{corollary}\label{cor3} Suppose  $|\omega[E]\cdot\omega[E]|=K |\omega[E]|$, for some $K \geq 1$.
Then for any $\epsilon >0$, $|\Phi[E]|\ll K^{O(-\log \epsilon)} |\omega[E]|^{\frac{11}{36}-\epsilon}.$\end{corollary}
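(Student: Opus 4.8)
The plan is to transplant the proof of Corollary~\ref{cor2}, with the set $W:=\omega[E]\setminus\{0\}$ playing the role of $S=\sin(\Phi-\Phi)$ there, and to feed in Theorem~\ref{mish} at the end. One may assume $|\Phi[E]|\geq 4$ (the statement is trivial otherwise); since $E$ is not supported on a line through $O$ one has $|W|\geq 1$, and the hypothesis gives $|WW|\leq|\omega[E]\cdot\omega[E]|=K|\omega[E]|\ll K|W|$.

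The first step is to realise the cross-ratios of the directions of $E$ inside a multiplicatively tame dilate of $W$. Pick an auxiliary line $\ell$ not through $O$ and parallel to none of the directions in $\Phi=\Phi[E]$, identify $\ell\cong\R$, and let $X\subset\R$ be the set of coordinates of the points where the rays from $O$ in the directions of $\Phi$ meet $\ell$, so $|X|=|\Phi[E]|$ and (direction)$\mapsto$(coordinate) is a M\"obius map, hence preserves cross-ratios. For four distinct directions, realised by four (necessarily distinct, as each point of $E$ has a single direction) points $e_a,e_b,e_c,e_d\in E$, formula~\eqref{tri} — in which, once each sine of an angle at $O$ is written as twice the corresponding triangle area divided by its two bounding radii, those radial factors all cancel in the fourfold ratio — presents $[x_a,x_b,x_c,x_d]$ as the cross-ratio of the four areas $\omega(e_a,e_b),\omega(e_c,e_d),\omega(e_a,e_c),\omega(e_b,e_d)\in W$. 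Thus, with $C:=C[X]$,
$$C\ \subseteq\ \frac{W\cdot W}{W\cdot W}.$$

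Now Pl\"unnecke's inequality in the group $(\R^{\times},\cdot)$ (Lemma~\ref{Plun}(i)) turns $|WW|\ll K|W|$ into $|W^{a}W^{-b}|\ll K^{a+b}|W|$ for fixed $a,b$; in particular $|C|\ll K^{4}|W|$ (as $C\subseteq W^{2}W^{-2}$) and $|WC|\ll K^{5}|W|$ (as $WC\subseteq W^{3}W^{-2}$). For the incidence step, exactly as in Corollary~\ref{cor2}: since $C-1=-C$ for any cross-ratio set, each of the $|W|^{2}$ distinct lines $y=(w^{-1}x-1)w'$ with $w,w'\in W$ carries at least $|C|$ points $\bigl(wc,\,(c-1)w'\bigr)$, $c\in C$, of the grid $WC\times(-WC)$. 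Theorem~\ref{SoTa}(ii) hence yields $|W|^{2}|C|\ll|WC|^{4/3}|W|^{4/3}+|WC|^{2}$, that is $|WC|\gg\min\bigl(|C|^{3/4}|W|^{1/2},\,|W||C|^{1/2}\bigr)$; comparing with $|WC|\ll K^{5}|W|$ forces $|C|\ll K^{10}$ or $|C|\ll K^{20/3}|W|^{2/3}$.

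Finally Theorem~\ref{mish} in the real case gives $|C|=|C[X]|\gg|X|^{2+\frac{2}{11}}=|\Phi[E]|^{24/11}$, so in either alternative $|\Phi[E]|\ll K^{55/12}+K^{55/18}|\omega[E]|^{11/36}$, which implies the claimed bound; the $\epsilon$ together with the $K^{O(-\log\epsilon)}$ is the cost of absorbing the ``$K$-dominated'' alternative and the $\min$ into one uniform inequality, or, if one prefers to route the upper bound on $|CC|\subseteq W^{4}/W^{4}$ through Proposition~\ref{cor1} and iterate, the cost of the exponent recursion converging from above to $2/3$ for $|C|$ against $|W|$ — hence to $11/36$ for $|\Phi[E]|$ against $|\omega[E]|$ — over $O(-\log\epsilon)$ steps. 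The main obstacle is the opening structural step: one must verify that the radial factors in~\eqref{tri} genuinely cancel so that $C[X]$ lands inside the multiplicatively controlled set $WW/(WW)$, and that distinct directions are represented by distinct points of $E$; once that is in place the incidence and Pl\"unnecke input is routine and parallels Corollary~\ref{cor2}.
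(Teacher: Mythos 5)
Your argument is correct and shares the paper's overall strategy (identify $\Phi$ with a scalar set $X$ on an auxiliary line, observe via \eqref{tri} that $C[X]\subseteq \omega^2/\omega^2$ after the radii cancel, then combine Pl\"unnecke, the incidence estimate and Theorem \ref{mish}), but your endgame differs from the paper's in a way worth noting. The paper iterates the estimate \eqref{itr} along the chain $C, C^2,\dots,C^k$ with $S=C^{k-1}$ and $|C^k|\leq K^{4k}|\omega|$ from $C^k\subseteq\omega^{2k}/\omega^{2k}$; the exponent $\alpha_k$ in $|C^k|\gg|C|^{\alpha_k}$ converges to $3/2$ exponentially, which is exactly where the $\epsilon$ and the factor $K^{O(-\log\epsilon)}$ come from. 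You instead apply \eqref{itr} once, with $S=W=\omega[E]\setminus\{0\}$ itself playing the role that $\sin(\Phi-\Phi)$ plays in Corollary \ref{cor2}, and use $WC\subseteq W^3W^{-2}$ to get $|WC|\ll K^5|W|$; this lands directly on $|C|\ll K^{20/3}|W|^{2/3}+K^{10}$, hence $|\Phi[E]|\ll K^{55/12}|\omega[E]|^{11/36}$ with a \emph{fixed} power of $K$ and no $\epsilon$-loss at all. That is formally cleaner and at least as strong as what the iteration yields. One caveat: what both proofs actually deliver is $|\Phi[E]|\ll K^{O(1)}|\omega[E]|^{11/36}$ (or, for the paper, $K^{O(-\log\epsilon)}|\omega[E]|^{\frac{11}{36}+\epsilon}$); the exponent $\frac{11}{36}-\epsilon$ in the statement as printed is evidently a sign slip (the preceding discussion asserts $|\Phi[E]|\lesssim|\omega[E]|^{11/36}$), and neither your argument nor the paper's proves the literal $-\epsilon$ version --- so your closing claim that your bound ``implies the claimed bound'' should be read against the corrected statement. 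The reduction step you were worried about is sound: distinct directions are realised by distinct points of $E$, the four areas involved are nonzero precisely because the four directions are distinct, and the radial factors do cancel in the fourfold ratio.
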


\begin{proof} The proof is basically the same as of Corollary \ref{cor2}. Assume that $E$ determines at least four directions or there is nothing to prove. Set $\omega=\omega[E]$, let $l$ be any line not through the origin and $A$ the set of scalars on $l$, corresponding to intersections of $l$ with lines through the origin in the directions from $\Phi$, so $|A|=|\Phi|$. Set  $C=C[A]$. It follows by \eqref{tri} that
$C\subseteq \omega^2/\omega^2$,  and for integer $k\geq 2$, $C^k\subseteq \omega^{2k}/\omega^{2k}$. Hence, we can use claim (i) of Lemma \ref{Plun}, and iterate the estimate \eqref{itr} above to yield the lower bound for $|C^k|$, with $S=C^{k-1}$ which approaches $|C|^{\frac{3}{2}}$ at exponential rate.

The claim follow by invoking Theorem \ref{mish} to bound $|C|^{\frac{3}{2}}\gtrsim |\Phi|^{\frac{36}{11}}.$
\end{proof}

The next, and final statement is conditional on Conjecture \ref{c1}. It shows that for any sufficiently large set of directions $\Phi\subset \R/\pi\mathbb Z$, the set $\sin(\Phi-\Phi)$ cannot be a subset of a polynomially long approximate geometric progression and provides evidence of  logarithmic (rather than polynomial) ad infinitum growth of the latter set under multiplication. 
\begin{proposition} Assume Conjecture \ref{c1} and suppose that $\Phi=\Phi[E]$ is the set of directions of the point set $E\subset \R^2$. For any $M>1$, $0<\epsilon<\frac{1}{2M}$ and  $|\Phi|>C(\epsilon)$, for some constant $C(\epsilon)$, there is no set $T$, such that $|T|=|\Phi|^M$, $|TT|\leq |T|^{1+\epsilon},$ and $T\supseteq \sin(\Phi-\Phi)$.

For integer $n\geq 0$,  one has $|(\sin(\Phi-\Phi)|)^{2^n}| \geq |\Phi|^{n}, $ where $|\Phi|>C(n)$, for some constant $C(n)$.

\end{proposition}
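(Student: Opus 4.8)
The plan is to realise $\sin(\Phi-\Phi)$ as a set of triangle areas and appeal to Conjecture \ref{c1}. Write $S=\sin(\Phi-\Phi)$, let $S^{j}$ denote the $j$-fold product set, and note that $S=-S$ (as $\sin$ is odd and $\Phi-\Phi=-(\Phi-\Phi)$) and $|S^{j}|\le|S^{j+1}|$ for all $j$. The one elementary fact used repeatedly: for unit vectors $u_{\phi},u_{\phi'}$ in directions $\phi,\phi'$, the signed area of the triangle $O,\ ru_{\phi},\ r'u_{\phi'}$ is $\tfrac12 rr'\sin(\phi-\phi')$. Thus, for a finite $R\subset(0,\infty)$, the point set $R\cdot u_{\Phi}:=\{ru_{\phi}:r\in R,\ \phi\in\Phi\}$ has $|R|\,|\Phi|$ points, is non-collinear once $|R|,|\Phi|\ge 2$, and has area set contained in $\tfrac12\,R\cdot R\cdot S$. (In the first statement one may moreover assume $T\subset(0,\infty)$, after discarding $0$ and passing to absolute values, which alters all cardinalities only by absolute constants; there $S$ denotes the nonzero part of $\sin(\Phi-\Phi)$, still $\subseteq T$.)

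\emph{First statement.} I would argue by contradiction: suppose such $T\supseteq S$ exists, put $K=|TT|/|T|\le|T|^{\epsilon}$, and apply the ``thin'' Pl\"unnecke inequality -- the multiplicative form of Lemma \ref{Plun}(ii) with $k=2$ -- to obtain $T'\subseteq T$ with $|T'|\ge .9|T|$ and $|T'\cdot T\cdot T|\ll K^{2}|T|\le|T|^{1+2\epsilon}$. Take $E^{*}=T'\cdot u_{\Phi}$, so $|E^{*}|=|T'|\,|\Phi|\gg|\Phi|^{M+1}$; since $S\subseteq T$ and $T'\subseteq T$, the area set obeys $\omega[E^{*}]\subseteq\tfrac12\,T'\cdot T'\cdot S\subseteq\tfrac12\,T'\cdot T\cdot T$, so $|\omega[E^{*}]|\ll|T|^{1+2\epsilon}=|\Phi|^{M(1+2\epsilon)}$. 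Conjecture \ref{c1} applied to the non-collinear set $E^{*}$ then forces $|\Phi|^{M+1}\lesssim|\Phi|^{M(1+2\epsilon)}$, i.e.\ $|\Phi|^{\,1-2M\epsilon}\lesssim 1$; but $\epsilon<\tfrac1{2M}$ makes this exponent positive, so for $|\Phi|>C(\epsilon)$ -- large enough that the honest power of $|\Phi|$ beats the logarithmic factor absorbed in $\lesssim$ -- we get a contradiction. It is precisely the \emph{thin} form that supplies the exponent $K^{2}$ here; the ordinary Pl\"unnecke bound $|T\cdot T\cdot T|\le K^{3}|T|$ would only reach $\epsilon<\tfrac1{3M}$.

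\emph{Second statement.} I would iterate the construction. Put $P_{1}=u_{\Phi}$ (the case $R=\{1\}$), so $2\omega[P_{1}]=S$, and let $\Lambda_{1}$ be the set of nonzero absolute values of $2\omega[P_{1}]$; then $\Lambda_{1}\subseteq S=S^{1}$, using $S=-S$. Given a finite $\Lambda_{k}\subset(0,\infty)$, put $P_{k+1}=\Lambda_{k}\cdot u_{\Phi}$ and let $\Lambda_{k+1}$ be the nonzero absolute values of $2\omega[P_{k+1}]$; by the area formula and $S=-S$, $\Lambda_{k+1}\subseteq\Lambda_{k}\cdot\Lambda_{k}\cdot S$. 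Tracking the multiplicative ``$S$-degree'': if $\Lambda_{k}\subseteq\bigcup_{j\le d}S^{j}$ then $\Lambda_{k+1}\subseteq\bigcup_{j\le 2d+1}S^{j}$, so by induction $\Lambda_{n}\subseteq\bigcup_{j\le 2^{n}-1}S^{j}$, whence $|\Lambda_{n}|\le 2^{n}|S^{2^{n}}|$ using $|S^{j}|\le|S^{j+1}|$. Conversely, each $P_{k}$ is non-collinear, so Conjecture \ref{c1} gives $|\Lambda_{k+1}|\gg|\omega[P_{k+1}]|\gtrsim|P_{k+1}|=|\Lambda_{k}|\,|\Phi|$, and similarly $|\Lambda_{1}|\gtrsim|\Phi|$; iterating $n$ times, $|\Lambda_{n}|\gtrsim|\Phi|^{n}$, with only $n$-dependent constants and a $\log^{O(n)}|\Phi|$ loss. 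Comparing the two estimates, $|S^{2^{n}}|\gtrsim|\Phi|^{n}$, which is the asserted bound once $|\Phi|>C(n)$ absorbs the accumulated factors.

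The step I expect to be the main obstacle is, in the first statement, recognising that the \emph{thin} Pl\"unnecke inequality must be used (rather than the plain one) to reach the full range $\epsilon<\tfrac1{2M}$; and, in the second, the bookkeeping of the multiplicative degree through the recursion, together with checking that the $n$ successive applications of Conjecture \ref{c1} each cost only an absolute constant and a $\log^{O(1)}|\Phi|$ factor, so that the total loss stays subordinate to $|\Phi|^{n}$ for $|\Phi|>C(n)$.
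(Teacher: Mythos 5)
Your proposal follows essentially the same route as the paper: for both statements you build the point set with directions $\Phi$ and radii in $T$ (resp.\ in an iterated product set), use that its area set sits inside $\tfrac12\,T'TT\supseteq$-type products via the thin Pl\"unnecke inequality, and play this off against Conjecture \ref{c1}, exactly as the paper does (the paper takes radii in $S^n$ directly so that $\omega[E]=S^{2n+1}$, while you track the degree through $\Lambda_k$, but this is only a cosmetic difference). The argument is correct at the same level of rigor as the paper's own.
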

\begin{proof} Suppose, for contradiction, that a putative $T$ exists. Refine $T$ to its large subset, such that  after refinement we have, the ``thin'' Pl\"unnecke bound 
$|TT\cdot \sin(\Phi-\Phi)|\ll |T|^{1+2\epsilon}$, cf. claim (ii) of Lemma \ref{Plun}. Take a point set $E$, comprising $|\Phi||T|$ points, lying in $|\Phi|$ directions from the origin, whose distances from the origin are in the set $T$. Since $\omega(E)=TT\cdot \sin(\Phi-\Phi)$,  for  sufficiently large $|\Phi|$ there is a contradiction with Conjecture \ref{c1}.

To prove the second statement of the proposition, amply true for $n=0$, let $n\geq 1$ and set $S=\sin(\Phi-\Phi)$, $T=(\sin(\Phi-\Phi))^n$. Construct the point set $E$ of $|\Phi||T|$ points in the plane as above, so $\Phi$ is the set of directions and $T$ the set of distances to the origin. Then $\omega[E]=S^{2n+1}.$
Assuming Conjecture \ref{c1} one must have $|S^{2n+1}| \gtrsim |S^{n}||\Phi|,$ that is sightly more than doubling $n$ increases the size of the set $(\sin(\Phi-\Phi))^n$ by at least a factor $|\Phi|$. The statement follows by induction in $n$, for sufficiently large $|\Phi|$.

\end{proof}

\section*{Acknowledgement} The author thanks O. Roche-Newton and G. Petridis for more than a ``welcome feedback'' on an earlier version of this note and Noam Solomon for clarifications concerning the use of the advanced bound  \eqref{mks}.

\vspace{1cm}

\end{document}